\theoremstyle{plain} \newtheorem{thm}{Theorem}
\theoremstyle{plain} \newtheorem{lemma}[thm]{Lemma}
\theoremstyle{plain} 
\theoremstyle{plain} 
\theoremstyle{plain} 
\theoremstyle{plain} 
\theoremstyle{definition} 
\theoremstyle{definition} 
\theoremstyle{plain} 
\newcommand{\sub}[0]{\subseteq}
\newcommand{\sm}[0]{\setminus}
\renewcommand{\dots}[0]{,\ldots,}
\newcommand{\ov}[0]{\overline}
\newcommand{\beq}[1]{\begin{equation}\label{#1}}
\newcommand{\enq}[0]{\end{equation}}
\newcommand{\bn}[0]{\bigskip\noindent}
\newcommand{\mn}[0]{\medskip\noindent}
\newcommand{\E}[0]{{\mathbb{E}}}
\newcommand{\gG}[0]{\Gamma }
\newcommand{\gl}[0]{\lambda }
\newcommand{\go}[0]{\omega}
\newcommand{\gO}[0]{\Omega}
\newcommand{\A}[0]{{\cal A}}
\newcommand{\B}[0]{{\cal B}}
\newcommand{\f}[0]{{\cal F}}
\author{Jacob D. Baron\thanks{Department of Mathematics, Rutgers University, Piscataway, NJ. 
Supported by the U.S. Department of Homeland Security under Grant Award 2012-ST-104-000044. The views and conclusions contained in this document are those of the authors and should not be interpreted as necessarily representing the official policies, either express or implied, of the U.S. Department of Homeland Security.} \and Jeff Kahn\thanks{Department of Mathematics, Rutgers University, Piscataway, NJ. Supported by the National Science Foundation under Grant Awards DMS1201337 and DMS1501962.} }
\title{A Natural Extension of the BK Inequality}
\date{Sept 2016}
\begin{document}

\maketitle

\begin{abstract}
We extend the seminal van den Berg--Kesten Inequality \cite{BK} on disjoint occurrence of two events to a setting with arbitrarily many events, where the quantity of interest is the maximum number that occur disjointly. This provides a handy tool for bounding upper tail probabilities for event counts in a product probability space.
\end{abstract}

\section{Introduction}\label{Intro}

The purpose of this note is to prove a natural stochastic domination result that greatly extends a fundamental inequality on disjoint occurrence of events.

To begin we recall a few definitions. For (real-valued) random variables $X$ and $Y$,
$Y$ {\em stochastically dominates} $X$ (written $X \preccurlyeq Y$) if
$\Pr(Y \geq r) \geq \Pr(X \geq r) \; \forall \, r \in \mathbb{R}$. 
An event $A$ in a partially ordered $\gG$ is \emph{increasing} if its indicator is a
nondecreasing function, and \emph{decreasing} if its complement is increasing.
A probability measure $m$ on a partially ordered $\gG$ is \emph{positively associated} (PA) if
$m(A\cap B)\geq  m(A)m(B)$ whenever both
$A$ and $B\sub \gG$ are increasing (or, equivalently, whenever both are decreasing), and
note that any probability measure on a linearly ordered $\gG$ is PA. We write $[n]$ for $\{1,2,\ldots,n\}$.

Our setting is a finite product probability space
$(\Omega,\mu)=\prod_{i=1}^n (\Omega_i,\mu_i)$ with each $\gO_i$ partially ordered.
Events $A_1, A_2, \ldots, A_k$ ($\sub \gO$) are said to \emph{occur disjointly at} $\omega \in \Omega$
if there are disjoint $S_1\dots S_k\sub [n]$ such that for each $i \in [k]$ and
$\omega' \in \Omega$,  we have $\omega' \in A_i$ whenever $\omega'$ agrees with $\omega$ on $S_i$.
We write
\[ \square_{i=1}^k A_i = \{ \omega \in \Omega : A_1,\ldots,A_k \text{ occur disjointly at } \omega \}.\]
The study of disjoint occurrence was initiated by van den Berg and Kesten \cite{BK}, who
showed what is now called the ``BK Inequality'':
\begin{align}
\label{eq:BK}
\Pr(A\square B) \leq \Pr(A) \Pr(B)
\end{align}
for increasing $A, B \subseteq \{0,1\}^n$ (see also e.g.\ \cite[Section 2.3]{Grimmett}). 
The following (substantial) extension of this seminal result is apparently new \cite{vdB}.

\begin{thm}
\label{TBK'}
Let $(\Omega,\mu)=\prod_{i=1}^n (\Omega_i,\mu_i)$ be a finite product probability space
with the $\gO_i$'s partially ordered and the $\mu_i$'s PA. Given $A_1, A_2, \ldots, A_k\sub\Omega$,
let 
\[
X = \max\{ |I| : I \subseteq [k] \text{ and } \square_{i \in I}A_i \text{ occurs}\}.
\]
Let $Y_1\dots Y_k$ be independent Bernoullis with $\mathbb{E}Y_i=\Pr(A_i)$,
$Y = \sum Y_i$, and $\gl=\sum \E Y_i$.  If the $A_i$'s are all increasing, or all decreasing, then 
\begin{align}
\label{YstochdomX}
X \preccurlyeq Y.
\end{align}
%
\end{thm}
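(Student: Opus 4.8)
The plan is to induct on the number of coordinates $n$, keeping the collection $A_1,\ldots,A_k$ arbitrary throughout. I treat the increasing case; the decreasing case follows by passing to the dual order on each $\Omega_i$, which preserves PA-ness and changes neither $\Pr(A_i)$ nor the law of $X$. For $n\le1$ the only possible witness sets are $\varnothing$ and $\{1\}$, and $\{1\}$ can serve at most one event, so $X=c+\mathbf{1}\{\omega\in\bigcup_{i:\,A_i\ne\Omega}A_i\}$ with $c=\#\{i:A_i=\Omega\}$; since the complements $\overline{A_i}$ are decreasing and $\mu_1$ is PA, $\Pr\bigl(\bigcup_{i:A_i\ne\Omega}A_i\bigr)\le1-\prod_{i:A_i\ne\Omega}(1-\Pr(A_i))=\Pr(Y\ge c+1)$, and \eqref{YstochdomX} follows (for $r\le c$ both sides equal $1$ since $X,Y\ge c$ a.s., and $\Pr(X\ge r)=0$ for $r>c+1$ since $X\le c+1$).

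For the inductive step, fix $n\ge2$ and condition on $\omega_n$. For $y\in\Omega_n$ put $A_i^{\,y}=\{\omega'\in\prod_{j<n}\Omega_j:(\omega',y)\in A_i\}$ and $\underline{A_i}=\bigcap_{z\in\Omega_n}A_i^{\,z}$; these are increasing events in $\prod_{j<n}\Omega_j$, $\underline{A_i}\subseteq A_i^{\,y}$, and $y\mapsto A_i^{\,y}$ is order-preserving. Since coordinate $n$ lies in at most one of the disjoint sets witnessing a disjoint occurrence, one checks that, as functions of $(\omega_1,\ldots,\omega_{n-1})$,
\[ X(\omega_1,\ldots,\omega_{n-1},y)=\max_{1\le j\le k}X^{(j,y)}, \]
where $X^{(j,y)}$ is the maximum disjoint-occurrence count of the list obtained from $\underline{A_1},\ldots,\underline{A_k}$ by replacing the $j$-th entry with $A_j^{\,y}$ — the index $j$ being the one allowed to ``spend'' coordinate $n$. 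Writing $\underline X$ for the count of $\underline{A_1},\ldots,\underline{A_k}$, one also has $\underline X\le X^{(j,y)}\le\underline X+1$ for every $j$, so $\max_j X^{(j,y)}=\underline X+\mathbf{1}\{\exists j:\,X^{(j,y)}>\underline X\}$. Now invoke the inductive hypothesis, applied to the list $\underline{A_1},\ldots,\underline{A_k}$ and to each of the $k$ lists underlying the $X^{(j,y)}$; with $q_i=\Pr(\underline{A_i})$ and $q_i^{\,y}=\Pr(A_i^{\,y})$ (so $q_i\le q_i^{\,y}$, $y\mapsto q_i^{\,y}$ is order-preserving, and $\Pr(A_i)=\int q_i^{\,y}\,d\mu_n(y)$) this reduces the theorem to the following: averaging over $y\sim\mu_n$ the laws of the $\max_j X^{(j,y)}$ — each controlled, by the inductive hypothesis together with the decomposition above, in terms of Bernoulli sums with parameters drawn from the $q_i$'s and $q_i^{\,y}$'s — yields a law dominated by that of the Bernoulli sum with parameters $\Pr(A_1),\ldots,\Pr(A_k)$.

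Proving this reduced statement is the crux, and I expect it to be the main obstacle. A naive union bound over $j$, or the valid-but-crude estimate replacing $\max_j X^{(j,y)}$ by the disjoint-occurrence count of the full section list $A_1^{\,y},\ldots,A_k^{\,y}$ (which forgets that coordinate $n$ can aid only one event), is far too lossy — already for $k=2$ and a two-point $\Omega_n$ it yields the wrong inequality. The argument must instead exploit: (a) the identification, using increasing-ness and the two-event case of the theorem in $\prod_{j<n}\Omega_j$, of $\{X^{(j,y)}>\underline X\}\cap\{\underline X\ge s\}$ with a subset of the disjoint-occurrence event $A_j^{\,y}\,\square\,\{\underline X\ge s\}$, which converts the ``$\exists j$'' event into a statement about disjoint occurrences of the $A_j^{\,y}$ off an optimal configuration of the $\underline{A_i}$'s; and (b) the PA property of $\mu_n$ together with the monotonicity $y\le y'\Rightarrow A_i^{\,y}\subseteq A_i^{\,y'}$, which is exactly what makes the $y$-average behave. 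For $k=2$ and two-point spaces this is the classical van den Berg--Kesten inductive step, which already needs a case split on whether the upper part of $\Omega_n$ has $\mu_n$-measure at most $\tfrac12$ and an interpolation between two competing bounds on the relevant union; the real work is to carry a multi-event, PA-weighted analogue of that interpolation through, presumably as a self-contained lemma proved by its own induction. (A tempting shortcut — iterate the two-event inequality to get $\Pr(\square_{i\in I}A_i)\le\prod_{i\in I}\Pr(A_i)$, hence $\mathbb{E}\binom{X}{m}\le e_m(\Pr(A_1),\ldots,\Pr(A_k))$ for all $m$ — does not by itself close the argument, since domination of the factorial moments by those of a Poisson--binomial does not in general imply stochastic domination.)
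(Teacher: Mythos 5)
Your write-up is not a proof: you reduce the theorem to a ``reduced statement'' about averaging the laws of $\max_j X^{(j,y)}$ over $y\sim\mu_n$, and then explicitly leave that statement unproved, correctly identifying it as the crux. The gap is real and is a consequence of the global structure you chose. Inducting on $n$ forces you, at each step, to compare the conditional law of $X$ directly to the Bernoulli sum with parameters $\Pr(A_i)=\int q_i^{\,y}\,d\mu_n(y)$; but your inductive hypothesis only controls each $X^{(j,y)}$ separately (each by a different Poisson--binomial, with only the $j$-th parameter upgraded from $q_j$ to $q_j^{\,y}$), and no one of these controls $\max_j X^{(j,y)}$. You would need a genuinely joint statement about the $k$ coupled counts, plus an interpolation over $y$, and you give no argument for either.

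The paper avoids this entirely by changing what it inducts on and what it compares to. It inducts on $\psi(\A)$, the number of coordinates affecting at least two of the $A_j$'s, and at each step it does \emph{not} try to reach the Bernoulli sum: it adjoins $k$ fresh independent copies $\Omega_{n+1},\ldots,\Omega_{n+k}$ of the shared coordinate $\Omega_1$, lets $B_j$ be $A_j$ rewired to read coordinate $n+j$ instead of coordinate $1$, and proves only $\mu(X_\A\ge r)\le\mu^*(X_\B\ge r)$. Conditioned on $\omega_{[2,n]}=y$ with $X_\A(y)=r-1$, the event $\{X_\A\ge r\}$ is $\{\omega_1\in\f\}$ where $\f=\cup_j\f_j$ and $\f_j$ is the (increasing) set of values of $\omega_1$ that can be ``spent'' on $A_j$ to raise the count to $r$ --- this is exactly the local picture you describe in your point (a) --- and then PA of $\mu_1$ gives $\mu_1(\cup_j\f_j)\le 1-\prod_j\mu_1(\ov{\f}_j)$, which is precisely $\mu^*(X_\B\ge r\mid\cdot)$ because the $B_j$'s read independent copies of the coordinate. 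The Bernoulli sum appears only at the base case $\psi=0$, where the events depend on pairwise disjoint coordinate sets and are therefore independent, so the laws of $X$ and $Y$ coincide. In short: you have the right one-coordinate mechanism (increasing $\f_j$'s plus PA) but the wrong comparison target for the inductive step; the missing idea is to enlarge the product space so that the step compares one disjoint-occurrence count to another, deferring the Bernoulli comparison to the trivial end of the induction. Also note that neither your base case nor the interpolation you gesture at is where the paper's difficulty lives; once the induction is set up correctly, no case analysis or interpolation is needed.
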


\bn \textbf{\emph{Remarks.}}
\begin{itemize}
\item[(i)] Taking $\Omega=\{0,1\}^n$, $k=2$ and $r=2$ in the definition of ``$X\preccurlyeq Y$" recovers \eqref{eq:BK} from \eqref{YstochdomX}.

\item[(ii)] The most spectacular of the developments growing out of \cite{BK} is Reimer's proof
\cite{Reimer} of the ``BK Conjecture'' (of \cite{BK}) which says that \eqref{eq:BK}
doesn't require that $A,B$ be increasing. In contrast, trivial
examples show this requirement (or some requirement) to be necessary in \eqref{YstochdomX};
for instance if $\Omega = \{0,1\}$ with uniform measure, $k=2$, $A_1 = \{0\}$
and $A_2=\{1\}$, then $\Pr(X \geq 1)  = 1 > 3/4 = \Pr(Y \geq 1).$

\item[(iii)] As a consequence of \eqref{YstochdomX}, the Chernoff Bound (e.g.\ \cite[Theorem 2.1]{JLR}) applied to $Y$ yields, for $t \geq 0$,
\begin{align}
\label{BKChernoff}
\Pr(X \geq \gl+t) \leq \exp\left[-\gl \, \varphi(t/\gl)\right] ~~~\left( \leq \exp\left[-t^2/(2(\gl+t/3))\right]\right)
\end{align}
(where $\varphi(x) = (1+x)\log(1+x)-x$ for $x > -1$, and $\varphi(-1)=1$). This looks similar to a lemma of Janson, proved (in slightly restricted form) in \cite[Lemma 2]{Janson} or \cite[Lemma 2.46]{JLR}:
\begin{lemma}
\label{JUB}
For events $A_1\dots A_k$
in a probability space, $\gl=\sum\Pr(A_i)$ and $t \geq 0$, letting 
\[\mbox{$Z = \max\{ |I| : I \subseteq [k]$, $\{A_i\}_{i \in I}$ are independent, and $\cap_{i \in I}A_i$ occurs$\}$,}\]
\begin{align}
\label{eq:JUB}
\Pr(Z \geq \gl+t) \leq \exp\left[-\gl \, \varphi(t/\gl)\right].
\end{align}
\end{lemma}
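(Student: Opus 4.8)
The plan is to bound the upper tail of $Z$ by the standard exponential-moment (Laplace transform) method, controlling $\E(1+x)^Z$ through a subset-counting argument that converts the ``independent occurring family'' structure into a clean product over the individual events. Write $p_i=\Pr(A_i)$, so $\gl=\sum p_i$; I may assume $t>0$ (the case $t=0$ makes the right-hand side of \eqref{eq:JUB} equal $1$) and $\gl>0$.

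The key pointwise observation is the following. Fix a sample point and let $I^*$ be an independent subfamily of maximum size $Z$ with $\cap_{i\in I^*}A_i$ occurring. Every $j$-element $J\sub I^*$ is again independent (subfamilies of mutually independent families are mutually independent) and satisfies $\cap_{i\in J}A_i\supseteq\cap_{i\in I^*}A_i$, hence also occurs. So if $N_j$ denotes the number of independent $j$-element families whose intersection occurs, then $N_j\ge\binom{Z}{j}$ pointwise, and therefore for every $x\ge0$,
\begin{align*}
(1+x)^Z=\sum_{j}\binom{Z}{j}x^j\le\sum_{J}x^{|J|},
\end{align*}
the last sum ranging over all independent $J\sub[k]$ with $\cap_{i\in J}A_i$ occurring at that point.

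Next I take expectations, which replaces the pointwise sum by $\sum_{J\text{ indep}}x^{|J|}\Pr(\cap_{i\in J}A_i)$. Using that an independent family factorizes, $\Pr(\cap_{i\in J}A_i)=\prod_{i\in J}p_i$, and then discarding the independence restriction (all terms are nonnegative for $x\ge0$),
\begin{align*}
\E(1+x)^Z\le\sum_{J\text{ indep}}x^{|J|}\prod_{i\in J}p_i\le\sum_{J\sub[k]}x^{|J|}\prod_{i\in J}p_i=\prod_{i=1}^k(1+xp_i)\le e^{x\gl}.
\end{align*}
Markov's inequality then gives $\Pr(Z\ge\gl+t)\le(1+x)^{-(\gl+t)}e^{x\gl}$ for every $x\ge0$, and choosing the minimizer $x=t/\gl$ turns the exponent into $-\gl\,\varphi(t/\gl)$, which is exactly \eqref{eq:JUB}.

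The argument is essentially routine once the pointwise bound $N_j\ge\binom{Z}{j}$ is in hand. The only place that demands care—and the real content—is recognizing that the independence built into the definition of $Z$ is precisely what is used, in two opposite directions: first to factor each $\Pr(\cap_{i\in J}A_i)$ as $\prod_{i\in J}p_i$, and then to relax the index set back to all subsets and collapse the generating function into $\prod(1+xp_i)$. This is why the independence hypothesis cannot be dropped; after it the remaining steps are standard Chernoff bookkeeping, and no genuine obstacle remains.
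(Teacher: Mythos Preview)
Your proof is correct. The core combinatorial observation---that every $j$-subset of a maximum independent occurring family is again independent and occurring, so $N_j\ge\binom{Z}{j}$---is exactly what drives the argument the paper cites from \cite[Lemma~2.46]{JLR} (and reproduces in the proof of Theorem~\ref{TBK2}). The difference is only in how the tail bound is extracted afterwards. The paper/JLR route fixes a single integer $r$, works with the $r$-th factorial-moment proxy $\chi=r!\sum_{|I|=r}\mathbf{1}[\cap_{i\in I}A_i]$, applies Markov to get $\Pr(Z\ge\gl+t)\le\gl^r/(\gl+t)_r$, sets $r=t$, and then bounds the resulting product by an integral. You instead sum over all $r$ at once via the generating function $(1+x)^Z$, land directly on $\E(1+x)^Z\le e^{x\gl}$, and optimize over the continuous parameter $x$. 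Your packaging is a bit cleaner---it sidesteps the integer choice of $r$ and the integral comparison---but the two arguments are really the same idea in two standard Chernoff dialects.
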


\hspace{.15in} But there are two big differences between \eqref{BKChernoff} and \eqref{eq:JUB}. On one hand, \eqref{eq:JUB} clearly applies more broadly. On the other hand, \eqref{BKChernoff} implies \eqref{eq:JUB} when it applies, since independent increasing (or decreasing) events, if they occur, necessarily occur disjointly (a standard observation easily extracted from the usual proof of Harris's Inequality \cite{Harris}). In fact when \eqref{BKChernoff} applies it can be much stronger than \eqref{eq:JUB}, because dependent events can easily occur disjointly---so $X$ can be much larger than $Z$, even though the bounds given for their upper tails are the same. For example, if $x_1,\ldots,x_k,y_1,\ldots,y_k$ are distinct vertices of the Erd\H{o}s--R\'{e}nyi random graph $G_{n,p}$ and, for $i \in [k]$, $A_i = \{\text{there is an } x_iy_i\text{-path}\}$, then $Z \leq 1$ but $X$ can be large.

\item[(iv)] It is not true that $Z \preccurlyeq Y$ in the generality of Lemma \ref{JUB}, as the example in Remark (ii) also shows.
\end{itemize}

\bigskip For \eqref{BKChernoff}, we can trade the requirement that the $A_i$'s be all increasing (or all decreasing) for the requirement that the $\Omega_i$'s be all linearly ordered:
\begin{thm}
\label{TBK2}
In the setting of Theorem \ref{TBK'}, with arbitrary $A_i$'s, \eqref{BKChernoff} holds if each $\Omega_i$ is linearly ordered.
\end{thm}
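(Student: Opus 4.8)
The plan is to deduce Theorem~\ref{TBK2} from Theorem~\ref{TBK'} by a symmetrization/splitting trick that converts an arbitrary event in a linearly ordered factor into a combination of an increasing and a decreasing event living in a slightly enlarged product space. Concretely, for each coordinate $i$ with $\Omega_i$ linearly ordered and each event $A_j$, observe that the ``influence'' of coordinate $i$ on $A_j$ can be split: given $\omega \in \Omega$, if $\omega' \in A_j$ whenever $\omega'$ agrees with $\omega$ on some set $S_j$, then on $S_j$ the event $A_j$ is ``witnessed'' either by the coordinate values being large enough, or small enough, or a mixture. The idea is to replace each $\Omega_i = \{v_1 < v_2 < \cdots\}$ by the product of a ``lower half'' and an ``upper half'' (or more cleanly, to double each coordinate into an up-copy and a down-copy and use the natural coupling $\mu_i \mapsto$ diagonal measure), so that any event $A_j \subseteq \Omega$ can be expressed as an intersection $A_j = A_j^\uparrow \cap A_j^\downarrow$ of an increasing and a decreasing event in the doubled space, with the key property that a disjoint-occurrence witness for $A_j$ in $\Omega$ pulls back to a pair of disjoint witnesses (one increasing, one decreasing) on disjoint coordinate sets of the doubled space.

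The execution goes roughly as follows. First, I would fix the product structure and, for each $i$, introduce the doubled factor $\widehat\Omega_i = \Omega_i \times \Omega_i$ with the product order and with measure $\widehat\mu_i$ the push-forward of $\mu_i$ under the diagonal map $\omega_i \mapsto (\omega_i, \omega_i)$; since this is a measure supported on a chain, it is PA, and $\widehat\mu = \prod \widehat\mu_i$ is a product of PA measures on partially ordered sets, so Theorem~\ref{TBK'} applies to it. Second, I would define, for each $j$, the events $A_j^+ = \{(\hat\omega) : \omega^{\mathrm{hi}} \text{ has the property that }\ldots\}$ and $A_j^- $ analogously, chosen so that (a) $A_j^+$ is increasing and $A_j^-$ is decreasing in $\widehat\Omega$; (b) under the diagonal embedding $\Delta:\Omega \hookrightarrow \widehat\Omega$, we have $\Delta^{-1}(A_j^+ \cap A_j^-) = A_j$; and (c) $\widehat\mu(A_j^+) = \widehat\mu(A_j^-) = \mu(A_j)$ --- this last point needs the chain support, since on a chain every event is both ``essentially increasing'' and ``essentially decreasing'' after the appropriate identification. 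Third, running $X$ through this correspondence: if $A_{i_1},\ldots,A_{i_m}$ occur disjointly at $\omega$ via disjoint $S_1,\ldots,S_m \subseteq [n]$, then $A_{i_1}^+,\ldots,A_{i_m}^+,A_{i_1}^-,\ldots,A_{i_m}^-$ occur disjointly at $\Delta(\omega)$ in $\widehat\Omega$, using disjoint sets $S_1^{\mathrm{hi}},\ldots,S_m^{\mathrm{hi}},S_1^{\mathrm{lo}},\ldots,S_m^{\mathrm{lo}}$ among the $2n$ coordinates. Hence if $\widehat X$ denotes the max-disjoint-occurrence count for the family $\{A_j^+\} \cup \{A_j^-\}$ on $(\widehat\Omega,\widehat\mu)$, then $X \leq \tfrac12 \widehat X$ pointwise after this embedding, so $\Pr(X \geq \lambda + t) \leq \Pr(\widehat X \geq 2\lambda + 2t)$, where $2\lambda = \sum_j \widehat\mu(A_j^+) + \sum_j \widehat\mu(A_j^-)$.

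Finally I would apply Theorem~\ref{TBK'} to the $2k$-event family $\{A_j^\pm\}$ on $(\widehat\Omega,\widehat\mu)$ --- but note that the theorem requires the events to be \emph{all} increasing or \emph{all} decreasing, which the mixed family $\{A_j^+\}\cup\{A_j^-\}$ is not. So the real argument must be slightly different: rather than stochastic domination of $\widehat X$ by a single Poisson-binomial, I would feed the doubled family into the Chernoff bound directly by a union-type or iterated argument, or --- more likely the cleanest route --- I would avoid mixing by handling the increasing and decreasing ``directions'' coordinate-by-coordinate, replacing each linearly ordered $\Omega_i$ by \emph{one} copy carrying an order-reversal only on the coordinates where a given witness uses the ``down'' direction. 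In other words, I expect the technically correct packaging is: show that for linearly ordered factors, any disjoint-occurrence configuration can be re-encoded, at the cost of doubling $\lambda$, as a disjoint-occurrence configuration of \emph{all-increasing} events in a product of PA spaces, and then invoke \eqref{YstochdomX} and the Chernoff bound \eqref{BKChernoff} with parameter $2\lambda$; one then checks that the resulting exponent $\exp[-2\lambda\,\varphi(2t/(2\lambda))] = \exp[-2\lambda\,\varphi(t/\lambda)]$ is at least as small as $\exp[-\lambda\,\varphi(t/\lambda)]$, which fails in the wrong direction --- so in fact one wants the re-encoding to \emph{not} double $\lambda$, forcing the split $A_j = A_j^+ \cap A_j^-$ to be done per-coordinate (each coordinate contributes to only one of $A_j^+, A_j^-$) rather than globally. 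The main obstacle, then, is precisely this bookkeeping: producing a single product-of-PA-measures space and a single all-increasing (after local order reversals) event family whose max-disjoint-occurrence count dominates $X$ \emph{without inflating $\lambda$}, so that \eqref{BKChernoff} for $Y$ transfers verbatim. Getting the order-reversal to interact correctly with ``increasing'' and with the PA hypothesis (order reversal preserves PA and swaps increasing$\leftrightarrow$decreasing, which is exactly why linear order is needed and why this fails for general posets, matching Remark (ii)) is the crux; once that local surgery is set up correctly, the disjointness of witnesses and the probability bookkeeping are routine.
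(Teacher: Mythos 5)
This is not yet a proof, and the route you have chosen cannot be completed. The central step --- writing an arbitrary $A_j$ as $A_j^+\cap A_j^-$ with $A_j^+$ increasing, $A_j^-$ decreasing, and $\Pr(A_j^\pm)=\Pr(A_j)$ --- is impossible: already for $\Omega=\{0,1,2\}$ and $A=\{0,2\}$, every increasing set containing $0$ and every decreasing set containing $2$ is all of $\Omega$, so no such decomposition exists. Doubling the coordinate does not help: with the diagonal coupling and the same order on both copies, an increasing (resp.\ decreasing) event of the doubled space restricts on the diagonal to an increasing (resp.\ decreasing) event of $\Omega$, so you face the same obstruction; with the order reversed on the second copy, the diagonal measure on $\{(0,0),(1,1)\}\sub\{0,1\}\times\{0,1\}^{\mathrm{rev}}$ is not PA (the principal up-sets of $(1,1)$ and of $(0,0)$ each have measure $1/2$ but their intersection is null), so Theorem \ref{TBK'} does not apply to the doubled space at all. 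You correctly notice that the mixed family $\{A_j^+\}\cup\{A_j^-\}$ is not covered by Theorem \ref{TBK'}, but the fallback you settle on --- re-encoding $X$ by the max-disjoint-occurrence count of an all-increasing family with the same individual probabilities and then quoting \eqref{YstochdomX} --- is ruled out in principle by the example in Remark (ii): it would yield $X\preccurlyeq Y$ for arbitrary events over linearly ordered factors, which is exactly what that example refutes. (Incidentally, your arithmetic objection to the ``doubled $\lambda$'' variant points the wrong way: $\exp[-2\lambda\varphi(t/\lambda)]\le\exp[-\lambda\varphi(t/\lambda)]$, so doubling would only strengthen the conclusion; that variant fails for the structural reasons above, not for numerology.) The moral is that Theorem \ref{TBK2} deliberately asserts only the tail bound \eqref{BKChernoff} and not the domination \eqref{YstochdomX}, precisely because the latter is false for arbitrary events; any argument that factors through Theorem \ref{TBK'} applied to a family faithfully encoding $X$ is therefore doomed.

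The paper's actual argument abandons stochastic domination entirely. Reimer's theorem (the resolved BK Conjecture), applied inductively, gives $\Pr(\square_{i\in I}A_i)\le\prod_{i\in I}\Pr(A_i)$ for arbitrary events when the factors are linearly ordered; this is \eqref{eq:ReimerInduction}, and it is the only place the hypothesis on the $\Omega_i$'s enters. One then runs the moment computation from Janson's Lemma \ref{JUB}: for a parameter $r$, the statistic $\chi=r!\sum_{|I|=r}B_I$, with $B_I$ the indicator of $\square_{i\in I}A_i$, satisfies $\E\chi\le\lambda^r$ by \eqref{eq:ReimerInduction}, while $X\ge\lambda+t$ forces $\chi\ge(\lambda+t)(\lambda+t-1)\cdots(\lambda+t-r+1)$; Markov's inequality and the choice $r=t$ then give \eqref{BKChernoff}. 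If you want to salvage your write-up, the piece worth keeping is the recognition that one needs some substitute for independence of the events $\square_{i\in I}A_i$; Reimer supplies it directly, with no need to manufacture monotonicity.
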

Unlike \eqref{BKChernoff}, this is neither stronger nor weaker than Lemma \ref{JUB} even when it appiles, because arbitrary independent events need not occur disjointly. For example, if $\Omega=\{0,1\}^n$ with uniform measure and, for $i \in [n-1]$, $A_i$ is the event that $\{\omega_i, \omega_n\}=\{0,1\}$, then $X \leq 1$ but $Z$ can be large.


\bn {\em \textbf{Historical Note.}} We learned of Lemma \ref{JUB} only after proving Theorem \ref{TBK'}; in fact our motivation for the theorem was to obtain something like the lemma, as in Remark (iii). Upon learning of the lemma, we realized its proof could be tweaked to give Theorem \ref{TBK2}.

\section{Proofs}

The proof of Theorem~\ref{TBK'}, which is similar to the original proof of \eqref{eq:BK} in \cite{BK},
is not hard but is a little awkward to write, and
a few additional definitions will be helpful. We prove it for increasing $A_i$'s; the decreasing case is of course analogous.

For $\gO=\prod_{i\in I}\gO_i$ and $S\sub I$, we take $\gO_S=\prod_{i\in S}\gO_i$
and, for $\go\in\gO$, $\go_S=(\go_i:i\in S)$.
For $A\sub \gO$ and $\go\in \gO_J$ for some $J\sub I$, $S\sub J$ is said
to {\em witness} $\go\in A$ if
$\omega' \in A$ whenever $\go'\in\gO$ and $\go'_S=\go_S$.
(This is of course abusive since we can't have $\go\in A$ unless $J=I$.)
We then (that is, for $\go\in \gO_J$) say
$A_1,\ldots,A_k $ ($\sub \gO$)
occur disjointly at $\go$ if there are disjoint $S_1\dots S_k\sub J$
such that $S_j$ witnesses $\go\in A_j$ $\forall j$ and,
for $\A = \{A_1,\ldots,A_k\}$, set
\[
X_\A(\go) =\max\{ |R| : R \subseteq [k], ~\mbox{the $A_j$'s indexed by $R$ occur disjointly at $\go$}\}.
\]
Thus the $X$ of Theorem~\ref{TBK'} 
is $X_\A$ evaluated at a random $\go\in \gO$.

\begin{proof}[Proof of Theorem~\ref{TBK'}]
Say $i\in [n]$ {\em affects} $A\sub \gO$ if there are $\go\in A$ and $\go'\in \gO\sm A$
with $\go_{[n]\sm\{i\}}=\go'_{[n]\sm\{i\}}$, and
for a collection $\B$ of events in $\gO$, let
$\psi(\B)$ be the number of $i\in [n]$ that affect at least two members of $\B$.

We 
proceed by induction on $\psi(\A)$.
If this number is zero then the laws of $X$ and $Y$ agree
(since the $A_j$'s are independent).
So we may assume $\psi(\A)\neq 0$, say (without loss of generality) the index 1 affects at least two of the $A_j$'s.

Let $(\gO_{n+j},\mu_{n+j})$, $j\in [k]$, be copies of $(\gO_1,\mu_1)$, independent of each
other and of $(\gO_1,\mu_1)\dots (\gO_n,\mu_n)$.
Let $(\gO^*,\mu^*) =\prod_{i=2}^{n+k} (\Omega_i,\mu_i)$
and (for $j\in [k]$)
\[
B_j=\{\go\in \gO^*: (\go_{n+j},\go_2\dots \go_n)\in A_j\}.
\]
Thus, apart from irrelevant variables, $B_j$ is a copy of $A_j$
gotten by replacing $(\gO_1,\mu_1)$ by $(\gO_{n+j}, \mu_{n+j})$.  In particular
$\Pr(B_j)=\Pr(A_j)$ and, with
$\B=\{B_1\dots B_k\}$, we have
$
\psi(\B) =\psi(\A)-1
$
(since $i\in [2,n]$ affects $B_j$ iff it affects $A_j$,
and $n+i$ affects $B_j$ iff $j=i$ and 1 affects $A_i$).
So by the inductive hypothesis it is enough to show
\beq{MP}
\mu(X_\A\geq r)~\leq ~\mu^*(X_\B\geq r)
\enq
for each positive integer $r$. Here it's convenient to work with the stronger conditional version:

\mn
\emph{Claim.}
{\em For each $y\in \gO_{[2,n]}$ (with $\mu_i(y_i)>0$ $\forall \, i\in [2,n]$),}
\beq{XZ}
\mu(X_\A(\go)\geq r\mid \go_{[2,n]}=y)~\leq ~\mu^*(X_\B(\go)\geq r\mid \go_{[2,n]}=y).
\enq

\mn
\emph{Proof of Claim.}
Since, for any $y\in \gO_{[2,n]}$ and $\go  \in \gO$ with $\go  _{[2,n]}=y$,
\[
\mbox{$X_\B(y) =X_\A(y)\leq X_\A(\go  )\leq X_\A(y)+1$,}
\]
we need only show \eqref{XZ} for $y$ with $X_\A(y)=r-1$
(since the left hand side of \eqref{XZ} is zero if $X_\A(y)\leq r-2$ and both sides are 1 if
$X_\A(y)\geq r$).

Given such a $y$, set
$
\f =\{x\in \gO_1:  X_\A(x,y) =r\}
$
and, for $i\in [k]$, let
$
\f_i\sub \gO_1$ consist of those
$x$'s for which there are $I\in \binom{[k]}{r}$ containing $i$ and disjoint $S_j$'s in $[n]$ ($j\in I$)
such that $S_j$ witnesses $(x,y)\in A_j$ (for $j\in I$) and $1\in S_i$.
Then, evidently,
\begin{itemize}
\item[$\circ$] each $\f_i$ is increasing,
\item[$\circ$] $\f=\cup_{i\in [k]}\f_i$,
\item[$\circ$] for $\go  \in \gO$ with $\go  _{[2,n]}=y$,
$X_\A=r$ iff $\go  _1\in \f$, and
\item[$\circ$] for $\go  \in \gO^*$ with $\go  _{[2,n]}=y$,
$X_\B\geq r$ iff $\go  _{n+j}\in \f_j$ for some $j\in [k]$,
\end{itemize}
whence
\begin{align*}
\mu(X_\A(\go)\geq r\mid \go_{[2,n]}=y) &= \mu_1(\f)
=1-\mu_1(\cap_{j\in [k]}\ov{\f}_j) \\
&\leq 1-\mbox{$\prod_{j\in [k]}\mu_1(\ov{\f}_j)$}
= \mu^*(X_\B(\go)\geq r\mid \go_{[2,n]}=y),
\end{align*}
where the inequality follows from that assumption that $\mu_1$ is PA.
\qedhere
\end{proof}

\bigskip For the proof of Theorem \ref{TBK2} we need just one little observation, which follows immediately from Reimer's Theorem \cite{Reimer} by induction: for events $\{A_i\}_{i \in I}$ in a product probability space with each factor linearly ordered,
\begin{align}
\label{eq:ReimerInduction}
\Pr(\square_{i \in I}A_i) ~ \leq ~ \prod_{i \in I} \Pr(A_i).
\end{align}

\begin{proof}[Proof of Theorem \ref{TBK2}] For some to-be-determined integer $r \leq k$ and each $I \subseteq [k]$ of size $r$, let $B_I$ be the indicator of $\square_{i \in I}A_i$. Let $\chi = r! \sum B_I$, so that \[ \E\chi = r!\sum_{|I|=r} \Pr(\square_{i \in I}A_i) \leq r! \sum_{|I|=r} \prod_{i \in I} \Pr(A_i) \leq \gl^r\] (by \eqref{eq:ReimerInduction}). 

The rest of the proof follows \cite[Lemma 2.46]{JLR} verbatim, so we will be brief. If $X \geq \lambda+t$ then $\chi \geq (\lambda+t)_r = \prod_{i=0}^{r-1}(\lambda+t-i)$, so by Markov, 
\[\Pr(X \geq \lambda+t) \leq \Pr(\chi \geq (\lambda+t)_r) \leq \frac{\lambda^r}{(\lambda+t)_r} = \prod_{i=0}^{r-1}\frac{\lambda}{\lambda+t-i}.\] 
Setting $r=t$ (to minimize the right hand side) yields
\[\log \Pr(X \geq \lambda+t) \leq \sum_{i=0}^{t-1} \log(\lambda/(\lambda+t-i)) \leq \int_0^t \log(\lambda/(\lambda+t-x)) \, \mathrm{d}x,\]
which, with calculus, gives the stronger bound in \eqref{BKChernoff}.
\end{proof}



\bibliographystyle{plain}
\bibliography{Allrefs.bib}

\begin{thebibliography}{1}

\bibitem{vdB}
J.~{van den} Berg.
\newblock Personal communication, Oct 2015.

\bibitem{BK}
J.~{van den} Berg and H.~Kesten.
\newblock Inequalities with applications to percolation and reliability.
\newblock {\em J. Appl. Probab.}, 22(3):556--569, Sept 1985.

\bibitem{Grimmett}
Geoffrey~R. Grimmett.
\newblock {\em Percolation}, volume 321 of {\em Grundlehren der mathematischen
  Wissenschaften}.
\newblock Springer-Verlag Berlin Heidelberg, Berlin, 2nd edition, 1999.

\bibitem{Harris}
T.~E. Harris.
\newblock A lower bound on the critical probability in a certain percolation
  process.
\newblock {\em Math. Proc. Cambridge Phil. Soc.}, 56(1):13--20, Jan 1960.

\bibitem{Janson}
Svante Janson.
\newblock Poisson approximation for large deviations.
\newblock {\em Random Structures Algorithms}, 1(2):221--229, June 1990.

\bibitem{JLR}
Svante Janson, Tomasz {\L}uczak, and Andrzej Ruci\'{n}ski.
\newblock {\em Random Graphs}.
\newblock Wiley-Interscience Series in Discrete Mathematics and Optimization.
  Wiley, New York, 2000.

\bibitem{Reimer}
David Reimer.
\newblock Proof of the {V}an den {B}erg--{K}esten conjecture.
\newblock {\em Combin. Probab. Comput.}, 9(1):27--32, Jan 2000.

\end{thebibliography}

\end{document}